\documentclass[12pt]{amsart}




\usepackage{amsmath,amssymb,amsthm,enumitem}

\usepackage{multirow}
\usepackage{tikz}
\usepackage{amsfonts}

\usetikzlibrary{matrix,arrows,decorations.pathmorphing,decorations.pathreplacing}

\usepackage{enumitem}
\usepackage{soul}

\usepackage[justification=centering]{caption}

\usepackage{ulem}

\usepackage{hyperref}

\usepackage{diagbox}

\makeatletter
\@namedef{subjclassname@2010}{%
  \textup{2010} Mathematics Subject Classification}
\makeatother

\DeclareMathOperator{\Gal}{Gal}

\DeclareMathOperator{\Max}{Max}

\renewcommand{\phi}{\varphi}

\usepackage{todonotes}

\newtheorem{theorem}{Theorem}[section]
\newtheorem*{thm}{Theorem}

\newtheorem{proposition}[theorem]{Proposition}
\newtheorem{lemma}[theorem]{Lemma}
\newtheorem{corollary}[theorem]{Corollary}

\theoremstyle{definition}

\def\cqfd{
{\hfill
\kern 6pt\penalty 500
\raise -1pt\hbox{\vrule\vbox to 5pt{\hrule width 4pt
\vfill\hrule}\vrule}}
\break}

\frenchspacing

\textwidth=13.5cm
\textheight=23cm
\parindent=16pt
\oddsidemargin=-0.5cm
\evensidemargin=-0.5cm
\topmargin=-0.5cm


\font\tengoth=eufm10
\font\sevengoth=eufm7
\font\fivegoth=eufm5
\newfam\gothfam
\textfont\gothfam=\tengoth\scriptfont\gothfam=\sevengoth\scriptscriptfont\gothfam=\fivegoth

\title[Closed points on  curves over finite fields]{Closed points on  curves over finite fields}

\author[Aubry]{Yves Aubry}
\address[Aubry]{Institut de Math\'ematiques de Toulon - IMATH, Universit\'e de Toulon, France}
\address[Aubry]{Institut de Math\'ematiques de Marseille - I2M, Aix Marseille Univ, CNRS, Centrale Marseille, France}
\email{yves.aubry@univ-tln.fr}

\author[Herbaut]{Fabien Herbaut}
\address[Herbaut]{INSPE Nice-Toulon, Universit\'e C\^ote d'Azur, France}
\address[Herbaut]{Institut de Math\'ematiques de Toulon - IMATH, Universit\'e de Toulon, France}

\email{fabien.herbaut@univ-cotedazur.fr}

\author[Monaldi]{Julien Monaldi}
\address[Monaldi]{Institut de Math\'ematiques de Toulon - IMATH, Universit\'e de Toulon, France}
\email{julien.monaldi@ac-nice.fr}

\begin{document} 

\footnote{This work is partially supported by the French Agence Nationale de la Recherche through the Barracuda project under Contract ANR-21-CE39-0009-BARRACUDA.}


\baselineskip=17pt


\begin{abstract}
We are interested in the quantity $\rho(q,g)$ defined 
as the smallest  positive integer  such that
$r\geq \rho(q,g)$ implies that
any absolutely irreducible smooth projective algebraic curve defined over ${\mathbb F}_q$ of genus $g$
has a closed point of degree $r$.
We provide general upper bounds for this number 
and its exact value 
for $g=1,2$ and $3$.
We also improve the known upper bounds on the number 
of closed points of degree 2 on a curve.

\end{abstract}

\date{\today}

\subjclass[2010]{Primary 14H25; Secondary 11G20}

\keywords{Algebraic curves, finite fields, closed points, diophantine stability}

\maketitle

\section{Introduction}
In the whole paper we consider  a power $q$ of a prime,
$\mathbb{F}_q$ the finite field with $q$ elements and
$\overline{\mathbb{F}}_q$ its algebraic closure.
Let $X$ be
an absolutely irreducible smooth projective algebraic curve   (just called curve from now on)
 defined over ${\mathbb F}_q$ of genus $g$.
This article deals with the notion of closed point, that is 
an orbit
under the action
of $\Gal \left( 
\overline{\mathbb{F}}_q /
 \mathbb{F}_q 
\right) $
on $X(\overline{\mathbb{F}}_q)$,
or equivalently 
 a place of the corresponding function field.
If $P$ is a closed point of $X$, we define its degree as the
cardinality of the orbit, 
or equivalently as the dimension over $\mathbb{F}_q $ of the residue
field,
that is the quotient of the local ring 
$\mathcal{O}_{P,X}$ by its maximal ideal $\mathcal{M}_{P,X} $.
Clearly the numbers 
$B_r(X)$ ($B_r$ for short) of closed points of degree $r$
are related to the numbers 
$N_r(X)=\sharp X({\mathbb F}_{q^r})$ 
($N_r$ for short) 
of rational points over ${\mathbb F}_{q^r}$ 
by the following formula:
\begin{equation}\label{link_N_r-B_d}
N_r(X)=\sum_{d\mid r}dB_d(X).
\end{equation}

The aim of the article is to study the  quantity $\rho(q,g)$
introduced in Problem 3.2.13 of \cite{TVN} by
Tsfasman,  Vl\u adu\c t  and  Nogin  and defined as
the smallest
 positive integer  such that
$r\geq \rho(q,g)$ implies that $B_r(X)\geq 1$ 
for any genus $g$ curve $X$ defined over ${\mathbb F}_q$.

They ask to find its exact value. 
In this paper  we contribute to this issue
by determining the exact value of $\rho(q,g)$ for $g=1, 2, 3$. 
We also
establish a new upper bound for the number 
$B_2(X)$  of  closed points of degree 2.
We summarize our contributions in the following theorem.

\begin{thm}(Proposition \ref{Borne_B2},Theorem \ref{theorem_genre_1}, Theorem \ref{theorem_genre_2}, Theorem \ref{theorem_genre_3})
Let $g$ be a positive integer and $X$ be a genus $g$ curve defined over ${\mathbb F}_q$.
\begin{enumerate}[label=(\roman*)]

\item An upper bound on $B_2(X)$ is given by

$$
B_2(X) \leq \left\{
    \begin{array}{ll}
        \frac{q^2+1+2gq}{2}-\frac{(q+1)^2}{2g} & \mbox{if} \ \ g \geq 2q+2 \\
         \frac{q^2+1+2gq}{2}-\frac{4(q+1)-g}{8} & \mbox{otherwise.}
    \end{array}
\right.
$$ \ \\

\item The values of $\rho(q,g)$ for $g=1,2$ and $3$ are as follows
\begin{center}
\begin{table}[!h]
\centering
\footnotesize
\begin{tabular}{|c|ccccccccccccccccccc|}
        \hline 
        \diagbox{$g$}{$q$} & $2$ & $3$ & $4$ & $5$ & $7 $ & $8 $ & $9$ & $11$ & $13$ & $16$ & $17$ & $19$ & $23$ & $25$ & $27 $& $29$ & $31$ & $32$ & $\geq 37$   \\
        \hline    
        $1$ & $5$ & $3$ & $3$ & $1$ & $\cdots$ &  &  & & & & & & & & & &  &$\cdots$  &$1$  \\
        $2$ & $4$ & $4$ & $2$ & $3$ & $2$ &$2$&$2$ & $2$ & $1$ & $\cdots$ & & & & & & &   & $\cdots$ & $1$  \\
        $3$ & $7$ & $5$ & $3$ & $3$ & $2$ & $2$ & $3$ & $2$ & $2$ & $2$ & $2$ & $2$ & $2$ & $2$ & $1$ & $2$ & $1$ & $2$ & $1$   \\
        \hline
\end{tabular} 
\normalsize
\caption{Values of $\rho(q,g)$ for $g=1,2$ and $3$.}  
\end{table}
\end{center}
\end{enumerate}
\end{thm}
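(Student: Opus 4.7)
The theorem consists of two largely independent claims, which I plan to establish by very different techniques. Part (i) is a Weil-bound optimization in the Frobenius eigenvalues. Part (ii) is a case analysis for each pair $(q,g)$ in the table, based on the known classification of Frobenius polynomials of curves of genus $1,2,3$.

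For (i), I would start from the immediate consequence $2B_2(X)=N_2(X)-N_1(X)$ of \eqref{link_N_r-B_d}. Writing the Frobenius eigenvalues of $X$ as $\alpha_i,\overline{\alpha_i}$ with $|\alpha_i|=\sqrt q$ and $t_i=\alpha_i+\overline{\alpha_i}\in[-2\sqrt q,2\sqrt q]$, one gets
\[
2B_2(X)=q^2-q+2gq+\sum_{i=1}^{g}(t_i-t_i^2),
\]
so upper-bounding $B_2$ amounts to maximizing $F(t_1,\dots,t_g)=\sum_i(t_i-t_i^2)$ over the box $[-2\sqrt q,2\sqrt q]^g$, subject to $\sum_i t_i\leq q+1$ (coming from $N_1\geq 0$). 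Since $t-t^2\leq 1/4$ with equality at $t=1/2$, the unconstrained optimum is $t_i\equiv 1/2$, which is feasible precisely when $g\leq 2q+2$: this produces the second formula in (i). In the regime $g\geq 2q+2$ the linear constraint becomes active, and a Lagrange multiplier applied to the concave functional $F$ on the hyperplane $\sum_i t_i=q+1$ forces the symmetric solution $t_i=(q+1)/g\leq 1/2$, producing the first formula. A short verification shows the two formulas agree at $g=2q+2$ and that $(q+1)/g\leq 2\sqrt q$ is automatic.

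For (ii), establishing $\rho(q,g)=r_0$ for each entry in the table decomposes into (a) exhibiting a single curve $X$ with $B_{r_0-1}(X)=0$ and (b) showing $B_r(X)\geq 1$ for every curve $X$ of genus $g$ over $\mathbb{F}_q$ and every $r\geq r_0$. The computational engine in both directions is the M\"obius inversion $rB_r=\sum_{d\mid r}\mu(r/d)N_d$ combined with Newton's identities, which express each $N_r$ polynomially in the coefficients of the Frobenius characteristic polynomial. For (a), I would choose an appropriate isogeny class: in genus $1$, by Deuring--Waterhouse every admissible trace $a$ is realized by an elliptic curve, and one picks $a$ so that $B_{r_0-1}$ vanishes; in genera $2$ and $3$, the analogous existence results of Maisner--Nart and Howe--Nart--Ritzenthaler guarantee a Jacobian with a prescribed $L$-polynomial, and I pick one giving $B_{r_0-1}=0$. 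For (b), in the small-$r$ cases I would iterate over the finitely many admissible $L$-polynomials and check positivity of $rB_r$ directly; in the large-$r$ cases one uses Weil's estimate $|N_r-q^r-1|\leq 2gq^{r/2}$ together with $\sum_{d\mid r,\,d<r} dB_d\leq \sum_{d\mid r,\,d<r} N_d$ to force $rB_r>0$ unconditionally, explaining the rightmost column $\rho(q,g)=1$.

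The main obstacle I anticipate is the genus-$3$ row. The Howe--Nart--Ritzenthaler list of $L$-polynomials actually realized by curves (as opposed to arbitrary abelian threefolds) is substantially more intricate than its genus-$2$ counterpart, so both the sharpness step (a) and the universal-positivity step (b) require careful bookkeeping across many small $q$, and in some cases fine-tuned ad hoc arguments to rule out a candidate isogeny class or to descend from an abelian variety to a Jacobian. Genus $2$ is considerably easier thanks to Maisner--Nart, and genus $1$ is essentially a direct calculation on the Hasse interval using Waterhouse's theorem.
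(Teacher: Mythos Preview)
Your proposal is correct, and both parts take a somewhat different route from the paper.

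For part (i), the paper does not optimize directly in the Frobenius traces; instead it invokes the Hallouin--Perret inequality $N_2-(q^2+1)\le 2gq-\tfrac1g\bigl(N_1-(q+1)\bigr)^2$ (proved by those authors via Euclidean geometry in $\operatorname{Num}(X\times X)_{\mathbb R}$), which together with $2B_2=N_2-N_1$ reduces the problem to maximizing the single-variable function $x\mapsto -\tfrac{x}{2}-\tfrac{(x-(q+1))^2}{2g}$ over $x=N_1\ge 0$. Your $g$-variable optimization is an equivalent but more self-contained argument: the Hallouin--Perret bound used here is nothing more than Cauchy--Schwarz $(\sum t_i)^2\le g\sum t_i^2$, and your Lagrange-multiplier step recovers exactly its equality case. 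What you gain is that no external theorem is cited; what the paper gains is that the optimization is in one variable.

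For part (ii), the global strategy (general upper bounds plus explicit curves for sharpness) is the same, but the paper leans almost entirely on citations---Lario's list of DS-curves, Maisner--Nart's genus-$2$ tables, the LMFDB, and crucially the Howe--Lauter--Top theorem that pointless genus-$3$ curves over $\mathbb F_q$ exist iff $q\le 25$ or $q\in\{29,32\}$---rather than on a systematic enumeration of realized $L$-polynomials. Your plan would reproduce these facts rather than quote them; this is feasible but, as you correctly anticipate, the genus-$3$ bookkeeping is substantial. One concrete place where your Weil-bound argument alone does not close the case is $q\in\{27,29,31,32\}$ with $g=3$: here $q+1-6\sqrt q<0$, so deciding whether $\rho=1$ or $2$ requires exactly the Howe--Lauter--Top (non)existence statement, which the paper simply cites and your enumeration would have to rederive.
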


Let us stress the connection between the topic
and the notion
of Diophantine stability introduced (over number fields) by Mazur and Rubin in \cite{M-R}: a variety $V$ defined over a field $K$ is said 
to be diophantine-stable for the field extension $L/K$ if $V(K)=V(L)$.
When $r$ is a prime number
and $X$ is a curve 
 defined over ${\mathbb F}_q$
 which is diophantine-stable for the extension 
 ${\mathbb F}_{q^r}/{\mathbb F}_q$ then
Formula (\ref{link_N_r-B_d})  implies $\rho(q,g)>r$.
 Curves with Diophantine stability (also called DS-curves) over finite fields have been studied by  Lario 
 who provides in \cite{Lario}
the complete list of isomorphism classes of DS-curves up to genus 3.
Vrioni has studied in \cite{V}  curves and surfaces with  Diophantine stability over finite fields.

\bigskip

\section{Known bounds on $B_r$ and   $\rho(q,g)$}
We collect in this section the known general bounds on $B_r$ and  $\rho(q,g)$
as well as existence results for points of given degree.
In this direction we also propose one contribution,
namely Proposition
\ref{Borne_avec_A},
which sometimes slighty improve the known bounds
and which prove useful 
to save some cases in the study of the last three sections.

The zeta function $Z_X$ of a curve $X$ of genus $g$ defined over ${\mathbb F}_q$ 
is defined by 
 $$Z_X(T)=\exp\Bigl(\sum_{n=1}^{\infty}\sharp X({\mathbb F}_{q^n})\frac{T^n}{n}\Bigr).$$ 
 It is well-known that it is a rational fraction of the form
 \begin{displaymath}
Z_X(T)=\frac{\prod_{j=1}^g(1-\omega_j T)(1-\overline{\omega_j}T)}{(1-T)(1-qT)}
\end{displaymath}
 and the Riemann Hypothesis, proved by Hasse for elliptic curves and by Weil for curves of any genus,
 says that the $\omega_j$'s are  complex numbers of absolute value $\sqrt{q}$.
  We obtain from the two previous expressions of the zeta function 
 \begin{equation}\label{Nombre_de_points_et_omega_i}
 N_n(X)=\sharp X({\mathbb F}_{q^n})=q^n+1-\sum_{j=1}^g(\omega_j^n+\overline{\omega_j}^n)
 \end{equation}
 for any $n\geq 1$, and 
the Riemann Hypothesis  implies that:
 \begin{equation}\label{Weil_Bounds}
 q^n+1-2gq^{n/2} \leq N_n(X) \leq q^n+1+2gq^{n/2}.
 \end{equation}
 
 When $r$ is a prime,
 one can combine  the equality $B_r=(N_r-N_1)/r$
 with the Weil bounds (\ref{Weil_Bounds}) to obtain the following Lemma.
 \begin{lemma}\label{lemma:closed_point_existence_p_prime}
 Let $X$ be an absolutely irreducible smooth projective algebraic curve
of genus $g$ defined over ${\mathbb F}_q$ and let $r$ be a prime number.
If $g<\frac{
\sqrt{q^r}-\sqrt{q}
}
{2}$ 
then $B_r(X)>0$.
\end{lemma}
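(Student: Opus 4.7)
The plan is to exploit the primality of $r$ in formula (\ref{link_N_r-B_d}), which forces the sum to collapse to only two terms, and then invoke the Weil bounds to control $N_r - N_1$ from below.

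First I would note that since $r$ is prime, its only positive divisors are $1$ and $r$, so (\ref{link_N_r-B_d}) becomes
\begin{equation*}
N_r(X) = B_1(X) + r B_r(X) = N_1(X) + r B_r(X),
\end{equation*}
using that $B_1 = N_1$ (degree-one closed points are precisely ${\mathbb F}_q$-rational points). Consequently $B_r(X) = (N_r(X) - N_1(X))/r$, so it suffices to show $N_r(X) > N_1(X)$ under the hypothesis on $g$.

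Next I would apply the Weil bounds (\ref{Weil_Bounds}): the lower bound for $N_r$ gives $N_r \geq q^r + 1 - 2g\sqrt{q^r}$, while the upper bound for $N_1$ gives $N_1 \leq q + 1 + 2g\sqrt{q}$. Subtracting,
\begin{equation*}
N_r(X) - N_1(X) \geq (q^r - q) - 2g\bigl(\sqrt{q^r} + \sqrt{q}\bigr).
\end{equation*}
The key algebraic step is the factorization $q^r - q = \bigl(\sqrt{q^r} - \sqrt{q}\bigr)\bigl(\sqrt{q^r} + \sqrt{q}\bigr)$, after which the right-hand side becomes
\begin{equation*}
\bigl(\sqrt{q^r} + \sqrt{q}\bigr)\bigl(\sqrt{q^r} - \sqrt{q} - 2g\bigr).
\end{equation*}

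Finally, the hypothesis $g < (\sqrt{q^r} - \sqrt{q})/2$ makes the second factor strictly positive, and the first factor is clearly positive, so $N_r(X) > N_1(X)$ and hence $B_r(X) > 0$. There isn't really a main obstacle here: the argument is a direct combination of the formula $B_r = (N_r - N_1)/r$ (valid precisely because $r$ is prime) and the Weil estimates, with the only mildly clever move being the factorization that matches the form of the hypothesis.
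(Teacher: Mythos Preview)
Your proof is correct and follows exactly the approach the paper indicates: combine the identity $B_r=(N_r-N_1)/r$ for prime $r$ with the Weil bounds (\ref{Weil_Bounds}). The factorization you give is the natural way to make the hypothesis appear, and there is nothing to add.
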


More broadly, for any integer $r \geq 1$,
the M\"obius inversion formula, and the Weil bounds lead to the following inequality  (see for instance Proposition 3.2.10 in \cite{TVN})
\begin{equation}\label{Inequality_TVN}
\left| B_r(X)-\frac{q^r}{r} \right| \leq \Bigl(\frac{
q}{q-1}+2g\frac{\sqrt{q}}{\sqrt{q}-1}\Bigr)\frac{q^{r/2}-1}{r}<(2+7g)\frac{q^{r/2}}{r}.
\end{equation}
The asymptotic expansion $B_r=q^r/r+O(\sqrt{q^r}/r)$ follows, 
for a fixed genus $g$ and for large values of $q$.
As a second consequence of (\ref{Inequality_TVN}), we can deduce that if $g=0$ 
then $B_r(X) > 0$ 
for any $r\geq 1$.
Therefore for any $q$ we have
$$\rho(q,0)=1.$$
 The case of genus zero curves is thus solved and we will consider curves of positive genus from now on.

As a  third consequence of (\ref{Inequality_TVN}),
one can deduce the Corollary 3.2.11 in \cite{TVN}
which reads 
(where $\left\lceil x \right\rceil$ stands for the smallest integer greater than or equal to a real number $x$)
 \begin{equation}\label{Bound_TVN}
\rho(q,g)\leq \left\lceil 2\log_q\Bigl(\frac{2g+1}{\sqrt{q}-1}\Bigr)+1\right\rceil
\end{equation}
and from which one can deduce the uniform bound $\rho(q,g)\leq 4g+3$.
Combining the M\"obius inversion formula and the Weil bounds is also the point of departure
of inequalities obtained by 
Elkies et al. in \cite{Elkies}.
For instance, Lemma 2.1 in their paper states for every $r>0$ the lower bound
\begin{equation}\label{equation:Elkies_Br}
B_r(X)>\frac{{q^r-(6g+3)q^{r/2}} } {r}
\end{equation}
and provides the following uniform inequality which applies for $g \geq 2 $ 
\begin{equation}\label{equation:Elkies_rho}
\rho(q,g)\leq 2g+1.
\end{equation}

When $r \geq 2$
the first author with Haloui and Lachaud manage to improve
Bound (\ref{equation:Elkies_Br}).
In Proposition 3.7. of \cite{AHL} they study similar bounds in the context
of abelian varieties, but
the proof adapts {\sl mutatis mutandis} 
if we consider a curve rather than an abelian variety.
More precisely, they 
start from the same M\"obius inversion formula
${\displaystyle B_r=\frac{1}{r} \sum_{d\mid r} \mu(r/d)  N_d}$
where $\mu$ is the M\"obius function
and thus replace the numbers $N_d$ by their expression 
in function of the roots $\omega_j$'s. 
They prove
$rB_r 
 > G(q^{r/4})$
 where 
$G(x)=(x+1)^2 \left( (x-1)^2-2g \right)$
and so the second point of Proposition 3.7 in  \cite{AHL} remains true when the inequality becomes strict, that is
\begin{equation}\label{Lower_Bound_AHL}
B_r> \frac{(q^{r/4}+1)^2\Big((q^{r/4}-1)^2-2g \Big)}{r}.
\end{equation}

A consequence is that if $g \geq 1$, $r \geq 2$ and $(q^{r/4}-1)^2\geq 2g$ then $B_r >0$.
We have thus checked that point (i) of Proposition 3.8 in \cite{AHL}
is still valid even if $g \geq 1$ and we can state:

\begin{proposition} 
{(Aubry, Haloui and Lachaud, Proposition 3.8  in \cite{AHL})} 
\label{theorem_general}

 If $g\geq 1$ then  
\begin{equation}\label{Aubry-Haloui-Lachaud}
\rho(q,g) \leq   \Max \left( 2 , \left\lceil 4\log_q(1+\sqrt{2g}) \right\rceil \right). 
\end{equation}
\end{proposition}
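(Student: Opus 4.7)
The plan is to deduce the proposition directly from inequality (\ref{Lower_Bound_AHL}), which is the main content already recalled. The strategy is to find the smallest $r$ for which the sufficient condition embedded in (\ref{Lower_Bound_AHL}) is guaranteed to hold, and then observe that this value bounds $\rho(q,g)$ from above.

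First I would note that in (\ref{Lower_Bound_AHL}) the factor $(q^{r/4}+1)^2$ is strictly positive, so a sufficient condition to obtain $B_r(X)>0$ (equivalently $B_r(X)\geq 1$, since $B_r(X)$ is a non-negative integer) is
$$ (q^{r/4}-1)^2 \geq 2g, $$
which by (\ref{Lower_Bound_AHL}) yields $rB_r(X)>(q^{r/4}+1)^2\bigl((q^{r/4}-1)^2-2g\bigr)\geq 0$. Together with the constraint $r\geq 2$ required to apply (\ref{Lower_Bound_AHL}), this will suffice.

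Next I would solve the inequality for $r$. Since $q\geq 2$ and $r\geq 2$ ensure $q^{r/4}\geq 1$, both sides are non-negative and we may take square roots to obtain the equivalent condition $q^{r/4}\geq 1+\sqrt{2g}$, i.e.
$$ r \geq 4\log_q\bigl(1+\sqrt{2g}\bigr). $$
The smallest integer satisfying this inequality is $\lceil 4\log_q(1+\sqrt{2g})\rceil$. Combining with the constraint $r\geq 2$, the smallest integer $r$ for which both hypotheses of the AHL criterion are satisfied is $\Max\!\left(2,\lceil 4\log_q(1+\sqrt{2g})\rceil\right)$. For any $r$ at least this value, every genus $g$ curve over $\mathbb{F}_q$ admits a closed point of degree $r$, which is exactly the definition of $\rho(q,g)$ being bounded by this integer.

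There is essentially no obstacle, since the heavy lifting (the M\"obius inversion argument leading to (\ref{Lower_Bound_AHL})) has already been done. The only point worth a sanity check is the borderline situation in which $\lceil 4\log_q(1+\sqrt{2g})\rceil\leq 1$, where the $\Max$ forces us to take $r=2$: in that regime the hypothesis $4\log_q(1+\sqrt{2g})\leq 1$ gives $q\geq (1+\sqrt{2g})^{4}$, so $q^{1/2}\geq (1+\sqrt{2g})^{2}\geq 1+\sqrt{2g}$ for $g\geq 1$, and the condition $(q^{1/2}-1)^2\geq 2g$ still holds. Thus the bound is valid in every case, proving (\ref{Aubry-Haloui-Lachaud}).
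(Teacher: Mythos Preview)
Your proof is correct and follows exactly the approach of the paper: deduce from (\ref{Lower_Bound_AHL}) that $B_r>0$ whenever $r\geq 2$ and $(q^{r/4}-1)^2\geq 2g$, then solve the latter for $r$. The paper merely states this consequence and refers to \cite{AHL}; you have written out the elementary details (and the sanity check for the case $\lceil 4\log_q(1+\sqrt{2g})\rceil\leq 1$ is in fact redundant, since $r\geq 2>1\geq 4\log_q(1+\sqrt{2g})$ already gives the needed inequality).
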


One can also notice that Bound (\ref{Inequality_TVN}) on $B_r$ 
involves a second degree polynomial in $q^{r/2}$
whose study leads
to  a quite efficient bound on $\rho(q,g)$.

\begin{proposition} \label{Borne_avec_A}
If $g\geq 2$ 
or if $g=1$ and
 $q\leq 9$, then:
\begin{equation}\label{Bound:general_1}
\rho(q,g)\leq \left\lfloor 2\log_q\Bigl(\frac{A+\sqrt{A(A-4)}}{2}\Bigr) +1 \right\rfloor
\end{equation}
 where
$A:=\frac{q}{q-1}+2g\frac{\sqrt{q}}{\sqrt{q}-1}$ and $\lfloor x \rfloor$ stands for the integer part of a real number $x$.

In particular, with the same conditions on $g$ and $q$, if $g<\frac{\sqrt{q}}{2}\Bigl(1-\frac{\sqrt{q}-1}{q-1}\Bigr)$ then $\rho(q,g)=1$.

\end{proposition}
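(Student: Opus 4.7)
The plan is to extract from Bound (\ref{Inequality_TVN}) a single quadratic condition in the variable $y:=q^{r/2}$. Multiplying (\ref{Inequality_TVN}) by $r$ and keeping only its lower half yields
$$rB_r \geq q^r - A q^{r/2} + A = P(q^{r/2}),\qquad P(y):=y^2-Ay+A,$$
so $B_r>0$ as soon as $P(q^{r/2})>0$, and the whole argument reduces to studying the sign of the quadratic $P$.

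Under either hypothesis ($g\geq 2$, or $g=1$ and $q\leq 9$) the first step is to verify $A\geq 4$, so that the discriminant $A(A-4)$ is non-negative and $P$ admits two real roots. This verification is routine: since $\frac{\sqrt{q}}{\sqrt{q}-1}>1$, the summand $\frac{2g\sqrt{q}}{\sqrt{q}-1}$ alone already exceeds $4$ as soon as $g\geq 2$, while for $g=1$ the finite list $q\leq 9$ is exactly the range in which $A\geq 4$ (a direct case check). Once $A\geq 4$, the larger root of $P$ is $y_2:=\frac{A+\sqrt{A(A-4)}}{2}$, and $P(y)>0$ for every $y>y_2$. Hence $B_r>0$ whenever $q^{r/2}>y_2$, i.e.\ whenever $r>2\log_q(y_2)$, and the smallest positive integer satisfying this strict inequality is $\lfloor 2\log_q(y_2)\rfloor+1$, which is precisely the right-hand side of (\ref{Bound:general_1}).

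For the ``in particular'' claim, the most direct route is to specialise $rB_r\geq P(q^{r/2})$ to $r=1$: we then have $B_1\geq q-A\sqrt{q}+A$, and $P(\sqrt{q})>0$ is equivalent to $A(\sqrt{q}-1)<q$, i.e.\ $A<\frac{q}{\sqrt{q}-1}$. Substituting the definition of $A$ and clearing denominators turns this in turn into $g<\frac{q}{2(\sqrt{q}+1)}=\frac{\sqrt{q}}{2}\bigl(1-\frac{\sqrt{q}-1}{q-1}\bigr)$, so that under this hypothesis $B_1>0$ and thus $\rho(q,g)=1$. (Equivalently, one can check that the same condition forces $y_2<\sqrt{q}$, making the right-hand side of (\ref{Bound:general_1}) equal to $1$, which gives a useful sanity check.) The only step requiring any genuine attention is the verification $A\geq 4$ in the listed cases and tracking signs when squaring in the second part; everything else is elementary manipulation of $P$.
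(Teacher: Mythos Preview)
Your approach is essentially the paper's: set $y=q^{r/2}$, read off $rB_r\geq P(y)$ with $P(y)=y^2-Ay+A$, check $A\geq 4$ so that $P$ has real roots, and conclude $B_r>0$ once $q^{r/2}$ exceeds the larger root $y_2$. That part is fine and matches the paper verbatim.

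There is, however, a genuine gap in your treatment of the ``in particular'' clause. Recall that $\rho(q,g)=1$ means $B_r(X)\geq 1$ for \emph{every} $r\geq 1$ and every genus-$g$ curve $X$; it does \emph{not} mean merely $B_1(X)\geq 1$. So your sentence ``under this hypothesis $B_1>0$ and thus $\rho(q,g)=1$'' is a non sequitur: specialising to $r=1$ and obtaining $P(\sqrt q)>0$ tells you nothing, by itself, about $B_2,B_3,\dots$. The argument you relegate to a parenthetical ``sanity check'' is in fact the only correct one here (and is exactly what the paper does): the condition $A<\frac{q}{\sqrt q-1}$ is equivalent to $2\log_q(y_2)<1$, which forces the right-hand side of~(\ref{Bound:general_1}) to equal $1$, and then the already-proved first part gives $\rho(q,g)\leq 1$. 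You should promote that parenthetical to the main line of the proof and drop the ``$B_1>0$ hence $\rho=1$'' step. (If you prefer the $P(\sqrt q)>0$ formulation, you must also argue that $\sqrt q>y_2$ rather than $\sqrt q<y_1$; this holds because $y_1\leq 2$ and the hypothesis on $g$ forces $q$ large enough that $\sqrt q>2$, but you would need to say so.)
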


\begin{proof}
When setting $X= q^{r/2}$, the lower bound of  Inequality (\ref{Inequality_TVN}) yields
$rB_r \geq X^2-AX+A$ where $A$ is as in the statement.
The discriminant of the rigth hand side is $A(A-4)$ which is positive
if $g \geq 2$, or if $g = 1$ and $q \geq 9$.
In these cases
we see that if
$r > 2 \log_q \left( \frac{A+\sqrt{A(A-4)}}{2} \right)$ then 
$B_r\geq 1$
and the bound follows.

An easy computation shows that $2\log_q\Bigl(\frac{A+\sqrt{A(A-4)}}{2}\Bigr)<1$ if and only if 
$A< \frac{q}{\sqrt{q}-1}$,
which is equivalent to 
$g<\frac{\sqrt{q}}{2}\Bigl(1-\frac{\sqrt{q}-1}{q-1}\Bigr)$.
Hence the previous condition on $g$ implies by Inequality (\ref{Bound:general_1}) that $\rho(q,g)=1$.

\end{proof}
Now one can deduce
bounds on $\rho(q,g)$
for small values of $g$ and large values of $q$
as stated in the following corollary.
These bounds will prove useful to reduce
the work of Sections 4, 5 and 6
to a finite number of cases.

\begin{corollary}\label{corollary_q_large}
We have $\rho(q,1)=1$ for any $q\geq 8$,
$\rho(q,2)\leq 2$ for any $q\geq 7$ and
$\rho(q,3)\leq 2$ for any $q\geq 37$.
\end{corollary}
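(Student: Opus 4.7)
The plan is to apply Proposition \ref{Borne_avec_A} in each of the three cases, computing the bound (\ref{Bound:general_1}) at the boundary value of $q$ and extending to larger $q$ by a monotonicity argument.

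The key preparatory observation is that for fixed $g$ the quantity $A = \frac{q}{q-1} + 2g\frac{\sqrt{q}}{\sqrt{q}-1}$ is strictly decreasing in $q$ (each of its two summands is manifestly decreasing), so whenever $A \geq 4$ the expression $\frac{1}{2}\bigl(A + \sqrt{A(A-4)}\bigr)$ is also decreasing in $q$. This reduces each case to a single numerical inequality checked at the smallest admissible $q$.

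For $g = 2$, $q \geq 7$ and for $g = 3$, $q \geq 37$ I would compute $A$ at the boundary: at $q = 7$, $g = 2$ one has $A \approx 7.60$ and $\frac{1}{2}(A + \sqrt{A(A-4)}) \approx 6.41 < 7$; at $q = 37$, $g = 3$ one has $A \approx 8.21$ and $\frac{1}{2}(A + \sqrt{A(A-4)}) \approx 7.04 < 37$. By the monotonicity above, the strict inequality $\frac{1}{2}(A + \sqrt{A(A-4)}) < q$ persists throughout the range, so $2 \log_q\bigl(\frac{1}{2}(A + \sqrt{A(A-4)})\bigr) < 2$, and (\ref{Bound:general_1}) yields $\rho(q, g) \leq 2$.

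The case $g = 1$, $q \geq 8$ I would split in two. For $q \in \{8, 9\}$ we have $A \geq 4$ and (\ref{Bound:general_1}) applies directly: at $q = 8$ one finds $A \approx 4.24$, $\frac{1}{2}(A + \sqrt{A(A-4)}) \approx 2.62$, so the bound reads $\lfloor 2 \log_8(2.62) + 1 \rfloor = 1$, and $q = 9$ is similar, giving $\rho(q, 1) = 1$. For prime powers $q \geq 11$, the monotonicity of $A$ together with the computation $A(11) \approx 3.96 < 4$ shows $A < 4$ throughout, so the quadratic $X^2 - AX + A$ has negative discriminant and is strictly positive for every real $X$. Substituting $X = q^{r/2}$ into Inequality (\ref{Inequality_TVN}) gives $r B_r \geq X^2 - AX + A > 0$ for every $r \geq 1$, forcing the integer $B_r$ to be $\geq 1$, whence $\rho(q, 1) = 1$.

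No step is genuinely hard; the work is bookkeeping and a handful of numerical checks at boundary values of $q$. The only mildly delicate point is the subcase $g = 1$, $q \geq 11$, which lies outside the stated hypothesis of Proposition \ref{Borne_avec_A} (there the formula (\ref{Bound:general_1}) would involve the square root of a negative number); this is handled by returning to Inequality (\ref{Inequality_TVN}) and exploiting positivity of the quadratic directly.
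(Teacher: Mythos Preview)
Your proposal is correct and follows essentially the same route as the paper: the split of the $g=1$ case into $q\in\{8,9\}$ (where Proposition~\ref{Borne_avec_A} applies directly) and $q\geq 11$ (where $A(A-4)<0$ forces the quadratic $X^2-AX+A$ to be everywhere positive, hence $B_r\geq 1$ for all $r$) is exactly what the paper does. For $g=2,3$ the paper simply declares the result ``straightforward by Proposition~\ref{Borne_avec_A}''; your monotonicity-plus-boundary-check argument is a clean way to fill in that claim.
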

\begin{proof}
If $q \in \{8,9 \}$ then Proposition \ref{Borne_avec_A} asserts that $\rho(q,1)=1$.
Moreover, if  $q\geq 11$ then 
$A(A-4)<0$ and thus
$B_r\geq 1$ for any $r$, which implies that $\rho(q,1)=1$.
Finally, the proof of the
cases of genus 2 and 3 is straightforward by
Proposition \ref{Borne_avec_A}.
\end{proof}
To conclude this section one should also mention
that the St\" ohr-Voloch theory
leads
to specific upper and lower bounds on $B_r$ 
for irreducible plane curves. See for instance Theorem 9.62 and Theorem 9.63 in \cite{HKT}.

%
%
%

\section{Upper bound for $B_2$}

In this section we  
 focus  on the number of closed points of degree 2 
 for which we  improve 
the known upper bounds.

We begin by summarizing the upper bounds obtained in the most natural ways.
The first idea is to specialize    Inequality (\ref{Inequality_TVN}) for $r=2$ to obtain
$$B_2\leq \frac{q^2+1+2gq}{2} +\frac{q-1+2g\sqrt{q}}{2}.$$
For comparisons it is interesting to adopt 
the asymptotic viewpoint for a fixed genus $g$
and for large values of $q$.
This way the asymptotic expansion of the upper bound reads
\begin{equation}\label{B_2_asymptotic_TVN}
B_2 \leq \frac{q^2}{2} +(g+\frac{1}{2})q +O(\sqrt{q}).
\end{equation}

Another natural idea is to start from 
 Formula (\ref{link_N_r-B_d}), namely
$N_2=B_1+2B_2$. Then the Weil upper bound for $N_2$ yields to  the following bound which involves $N_1$:
\begin{equation}\label{B_2_Weil_avec_N_1}
B_2=\frac{N_2-N_1}{2}\leq \frac{q^2+1+2gq}{2} -\frac{N_1}{2}.
\end{equation}

Again from the Weil lower bound for $N_1$, 
 one deduces the following bound which only depends on $q$:
\begin{equation}\label{B_2_Weil}
B_2 \leq \left\{
    \begin{array}{ll}
        \frac{q^2+1+2gq}{2} & \mbox{if} \ \ g \geq \frac{q+1}{2\sqrt{q}} \\
         \frac{q^2-q+2g(q+\sqrt{q})}{2} & \mbox{otherwise.}
    \end{array}
\right.
\end{equation}
If we adopt the same asymptotic point of view for large values of $q$ we get
\begin{equation}\label{B_2_asymptotic_Weil}
B_2 \leq \frac{q^2}{2} +(g-\frac{1}{2})q +O(\sqrt{q}).
\end{equation}

Let us now present our new upper bound.
\begin{proposition}\label{Borne_B2}
Let $X$ be a curve of genus $g>0$ defined over ${\mathbb F}_q$. 

 We have:
\begin{equation}\label{equation:B2}
B_2(X) \leq \left\{
    \begin{array}{ll}
        \frac{q^2+1+2gq}{2}-\frac{(q+1)^2}{2g} & \mbox{if} \ \ g \geq 2q+2 \\
         \frac{q^2+1+2gq}{2}-\frac{4(q+1)-g}{8} & \mbox{otherwise.}
    \end{array}
\right.
\end{equation}
\end{proposition}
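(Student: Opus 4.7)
The plan is to parametrize the Frobenius eigenvalues as $\omega_j = \sqrt{q}\,e^{i\theta_j}$ with $\theta_j \in [0,\pi]$, introduce $x_j := \cos\theta_j \in [-1,1]$, and then express $B_2$ as a sum of concave functions of the $x_j$'s whose maximum can be computed explicitly.

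First, I would substitute into Formula (\ref{Nombre_de_points_et_omega_i}) and use $\cos(2\theta_j) = 2x_j^2 - 1$ to obtain
\[
N_1 = q + 1 - 2\sqrt{q}\sum_{j=1}^g x_j, \qquad N_2 = q^2 + 1 + 2gq - 4q\sum_{j=1}^g x_j^2.
\]
Combining these via the identity $B_2 = (N_2 - N_1)/2$, which follows from (\ref{link_N_r-B_d}), yields
\[
B_2 = \frac{q^2 - q + 2gq}{2} + \sum_{j=1}^g \bigl(\sqrt{q}\,x_j - 2q\,x_j^2\bigr).
\]
Each summand $f(x_j) := \sqrt{q}\,x_j - 2q\,x_j^2$ is a concave function of $x_j$, so setting $S := \sum_{j} x_j$, Jensen's inequality gives
\[
\sum_{j=1}^g f(x_j) \leq g\,f(S/g) = \sqrt{q}\,S - \frac{2q}{g}\,S^2 =: h(S).
\]

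The key observation is that the positivity $N_1 \geq 0$ imposes the constraint $S \leq (q+1)/(2\sqrt{q})$, whereas $h$ is a downward parabola attaining its global maximum $g/8$ at $S^\star = g/(4\sqrt{q})$. The two cases of the proposition correspond to whether $S^\star$ is feasible. If $g \leq 2q+2$, then $S^\star \leq (q+1)/(2\sqrt{q})$ and the unconstrained maximum is admissible, giving after rearrangement $B_2 \leq \frac{q^2+1+2gq}{2} - \frac{4(q+1)-g}{8}$. If $g \geq 2q+2$, the constraint binds: since $h$ is increasing on $[0, S^\star]$, it is maximized on the feasible segment at the right endpoint $S = (q+1)/(2\sqrt{q})$, producing $h(S) = (q+1)/2 - (q+1)^2/(2g)$ and hence $B_2 \leq \frac{q^2+1+2gq}{2} - \frac{(q+1)^2}{2g}$.

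I do not expect any serious technical obstacle: the whole argument reduces to a constrained one-dimensional concave maximization in $S$. The only points to verify carefully are that the optimal values of $x_j$, namely $1/(4\sqrt{q})$ in the first regime and $(q+1)/(2g\sqrt{q})$ in the second, lie in $[-1,1]$, and that in the second case $h$ is indeed increasing throughout the feasible interval, both of which follow from short direct computations.
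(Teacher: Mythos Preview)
Your argument is correct and, once unpacked, is essentially the same as the paper's. The paper cites the Hallouin--Perret inequality
\[
N_2 - (q^2+1) \leq 2gq - \frac{1}{g}\bigl(N_1-(q+1)\bigr)^2
\]
and then maximizes the resulting bound for $B_2$ as a function of $N_1 \geq 0$. Your Jensen step $\sum x_j^2 \geq S^2/g$ is precisely this inequality rewritten in the variables $x_j = \cos\theta_j$ (substituting $S=(q+1-N_1)/(2\sqrt{q})$ recovers Hallouin--Perret verbatim), and your subsequent maximization of $h(S)$ under the constraint $S \leq (q+1)/(2\sqrt{q})$ is the same one-variable optimization the paper carries out in the variable $N_1$. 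So the decomposition, the key inequality, and the case split all coincide; the only difference is that you derive the Hallouin--Perret bound from scratch via Cauchy--Schwarz on the Frobenius angles rather than quoting it, which makes your write-up self-contained. One minor remark: the achievability checks you mention (that the optimal $x_j$ lie in $[-1,1]$) are not actually needed, since you only require the \emph{upper} bound $\sum f(x_j) \leq h(S)$ and then maximize $h$ over the feasible range of $S$.
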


The following recent result is the point of departure to prove 
Proposition \ref{Borne_B2}. It has been 
interpreted by Hallouin and Perret 
as a consequence of an inequality of Euclidean geometry
in the numerical space $Num(X \times X)_{\mathbb{R}}$.
\begin{thm} {(Hallouin and Perret, Proposition 12 in  \cite{Hallouin-Perret})}
Let $X$ be a curve of genus $g$ defined over $\mathbb{F}_q$. Then
\begin{equation}
\sharp X({\mathbb F}_{q^2}) - (q^2+1) \leq 2gq-\frac{1}{g}\bigl(\sharp X({\mathbb F}_{q}) - (q+1)\bigr)^2.
\end{equation}
\end{thm}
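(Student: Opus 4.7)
The plan is to reduce the inequality to a direct application of the Cauchy--Schwarz inequality, applied to the real traces of the Frobenius eigenvalues that already appear in Formula (\ref{Nombre_de_points_et_omega_i}).

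First, I would introduce the notation $t_j := \omega_j + \overline{\omega_j}$ for $j=1,\dots,g$. These are real numbers (each pair $(\omega_j,\overline{\omega_j})$ is either complex conjugate or two real roots of the same factor), and from the Riemann Hypothesis we have $\omega_j\overline{\omega_j}=q$, so
$$\omega_j^2 + \overline{\omega_j}^2 = (\omega_j+\overline{\omega_j})^2 - 2\omega_j\overline{\omega_j} = t_j^2 - 2q.$$
Substituting into Formula (\ref{Nombre_de_points_et_omega_i}) for $n=1$ and $n=2$ yields the two identities
$$\sharp X({\mathbb F}_{q}) - (q+1) = -\sum_{j=1}^{g} t_j, \qquad \sharp X({\mathbb F}_{q^2}) - (q^2+1) = 2gq - \sum_{j=1}^{g} t_j^2.$$

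Next, I would apply the Cauchy--Schwarz inequality to the vectors $(1,\dots,1)$ and $(t_1,\dots,t_g)$ in $\mathbb{R}^g$, which gives
$$\Bigl(\sum_{j=1}^{g} t_j\Bigr)^{\!2} \leq g\,\sum_{j=1}^{g} t_j^2,$$
or equivalently $\sum_{j=1}^{g} t_j^2 \geq \frac{1}{g}\bigl(\sum_{j=1}^{g} t_j\bigr)^2$. Plugging this into the expression for $\sharp X({\mathbb F}_{q^2}) - (q^2+1)$ obtained above, and using that the square of $\sum_j t_j$ equals the square of $\sharp X({\mathbb F}_q)-(q+1)$, produces exactly the claimed bound.

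There is essentially no obstacle along this route: once Weil's formula is expressed in the variables $t_j$, the statement becomes a one-line consequence of Cauchy--Schwarz. The content of Hallouin--Perret's result lies in their more conceptual interpretation, where the same inequality is obtained from the Hodge index theorem applied to the classes of the two Frobenius graphs inside $\Num(X\times X)_{\mathbb{R}}$ equipped with its Euclidean structure; our elementary route bypasses this geometry but recovers the bound, with equality iff all $t_j$'s are equal.
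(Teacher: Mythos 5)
Your proof is correct, and it is a genuinely different route from the one the paper relies on: the paper does not reprove this statement but imports it from Hallouin--Perret, where it arises from Euclidean geometry in $\Num(X\times X)_{\mathbb R}$ (essentially the Hodge index theorem applied to the classes of the diagonal and of the graph of Frobenius). Your derivation is the classical elementary one, going back to Ihara's proof of his bound \cite{Ihara}: write $t_j=\omega_j+\overline{\omega_j}\in[-2\sqrt q,2\sqrt q]$, use $\omega_j\overline{\omega_j}=q$ to get $N_1-(q+1)=-\sum_j t_j$ and $N_2-(q^2+1)=2gq-\sum_j t_j^2$ from Formula (\ref{Nombre_de_points_et_omega_i}), and apply Cauchy--Schwarz; every step checks out, and your equality criterion (all $t_j$ equal) is also right. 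The only caveat worth recording is that the argument, like the statement itself, requires $g\geq 1$ because of the division by $g$. What the elementary route buys is a short, self-contained proof from facts already stated in the paper; what the geometric route buys is the broader framework of \cite{Hallouin-Perret}, which produces a whole hierarchy of analogous constraints involving $\sharp X(\mathbb F_{q^n})$ for higher $n$ (via Gram-matrix positivity of more Frobenius powers), of which this inequality is only the first nontrivial instance.
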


\begin{proof}
The previous theorem gives immediately 
 (see also Proposition 3.1. in \cite{A-I} where it has already been noticed):
\begin{equation}\label{Halloin-Perret_Aubry-Iezzi}
B_2\leq  \frac{q^2+1+2gq}{2}-\frac{N_1}{2}-\frac{(N_1-(q+1))^2}{2g}
\end{equation}
which clearly improves Bound (\ref{B_2_Weil_avec_N_1}).
Now we consider the function $x\mapsto -\frac{1}{2g}(x-(q+1))^2-\frac{x}{2}$. 
If $g\geq 2q+2$  this function reaches its maximum 
for $x=0$ which implies  $B_2\leq  \frac{q^2+1+2gq}{2}-\frac{(q+1)^2}{2g}$. 
If $g\leq 2q+2$ it reaches its maximum for $x=\frac{2q+2-g}{2}$ and thus we deduce $B_2\leq  \frac{q^2+1+2gq}{2}-\frac{4(q+1)-g}{8}$.
\end{proof}

Straightforward computations in the three
intervals $[1, \frac{q+1}{2\sqrt{q}}]$, $[ \frac{q+1}{2\sqrt{q}}, 2q+2]$, and $[2q+2, \infty)$
enable us to check that the new bounds of Proposition \ref{Borne_B2} 
are always better than  Bound (\ref{B_2_Weil}) deduced from the Weil bounds.

Let us come back to the asymptotic viewpoint for
a fixed genus $g$ and for large values of $q$ to compare
 Bound (\ref{B_2_Weil})
with our  new bound (\ref{equation:B2}) which reads
\begin{equation}\label{B_2_asymptotic_Weil}
B_2 \leq \frac{q^2}{2} +(g-\frac{1}{2})q +O(1).
\end{equation}
In the asymptotic expansion of the upper bound 
we have managed to replace a $O(\sqrt{q})$ by a $O(1)$ .

We now provide examples to show that 
the integer part of the new bound 
(\ref{equation:B2}) of
Proposition \ref{Borne_B2} is reached 
for different values of $g$ and $q$.
The third and fourth columns of the following table
enable us to compare the bound (\ref{B_2_Weil})
deduced from the Weil bounds
to our new bound (\ref{equation:B2}) for some values $q$ and $g$.
The fifth column provides  equations of curves which attain
 our new bound. 
 To describe the curves
we sometimes need to introduce  
a generator $a$ of the multiplicative group ${\mathbb F}_q^{\ast}$.
We also give the numbers of rational points $N_1$ and $N_2$ 
over ${\mathbb F}_q$ and ${\mathbb F}_{q^2}$ of the given curve.

\begin{center}
\begin{table}[!h]
\centering
\begin{tabular}{|c|c|c|c|l|c|c|}
\hline
 $g$& $q$&Bound    & Bound    &  Curves reaching bound
 (\ref{equation:B2})
  of Proposition (\ref{Borne_B2})  & $N_1$ & $N_2$ \\
 & & (\ref{B_2_Weil})    & (\ref{equation:B2})  & & &  \\
\hline
$1$ & 2  & 4 & 3 &  $y^2+xy=x^3+x^2+1$   & $2$ & $8$  \hfill \\
\hline
1& 3 & 7 & 6 &  $y^2=x^3+x^2-1$ & 3 &15 \hfill  \\
\hline
1 &4 & 12 & 10 & $y^2+xy=x^3+a$  & 4 &24 \hfill \\
\hline
2 &3& 11 & 9 &  $y^2=2x^6+x^4+2x^3+x^2+2$ & 2 & 20  \hfill\\ 
\hline
2&4& 16 & 14 &   $y^2+(x^2+x)y=x^5+x^3+x^2+x$  & 3 & 31  \hfill \\
\hline
2&5 & 23 & 20 &   $y^2=4x^6+x^5+x^4+x^3+x^2+x+4$  & 4 & 44 \hfill \\
\hline
2 & 7 & 39 & 35 &   $y^2=3x^6+3x^3+3$  & 6 & 76  \hfill \\
\hline
2 &8 & 48 & 44 &   $y^2+(x^2+x)y=a^2x^5+a^2x^3+ax^2+ax$ & 7 & 95 \hfill \\
\hline
2 &9 & 59 & 54 &  $y^2=2ax^6+ax^5+2ax^4+ax^2+ax+a$ & 8 & 116 \hfill \\
\hline
$2$ &11 & 83 & 77 &  $y^2=7x^6+5x^5+9x^4+8x^3+5x^2+6x+7$ & 10 & 164 \hfill \\
\hline
$3$ & $2$ & 8 & 7  & $x^4+x^2y^2+x^2yz+x^2z^2+xy^2z+xyz^2+y^4+y^2z^2+z^4=0$ & 0 & 14\\
\hline
\end{tabular}
\caption{Ex. of curves which attain the bound of Proposition \ref{Borne_B2}.\\ 
In column $5$ we denote by $a$ a generator of $\mathbb{F}_q^{\ast}$}
\end{table}
\end{center}


\section{Elliptic curves}
In this section we consider an elliptic curve $X$  defined over ${\mathbb F}_q$.

\begin{theorem} \label{theorem_genre_1} 
The values of $\rho(q,1)$ are as follows.\\ \ \\
\begin{tabular}{ccccl}
        (i) & $\rho(2,1)=5$ & &  (ii) &  $\rho(3,1)=3$  \\ \\
       
         (iii) & $\rho(4,1) = 3$ & & (iv) & $\rho(q,1)=1$ for any $q \geq 5$ \\  \\

\end{tabular} 

\end{theorem}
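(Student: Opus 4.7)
The plan is to treat the four items separately in decreasing order of $q$, since the general upper bounds on $\rho$ from Section 2 become progressively tighter as $q$ grows. Throughout, I would use the fact that the zeta function, and hence the sequence $(B_r(X))_{r \geq 1}$, depends only on the Frobenius trace $t$, together with the recursion $S_{n+1} = t\,S_n - q\,S_{n-1}$ for the Newton sums $S_n = \omega^n + \overline{\omega}^n$ attached to formula \eqref{Nombre_de_points_et_omega_i}.

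For item (iv), Corollary \ref{corollary_q_large} already supplies $\rho(q,1) = 1$ for $q \geq 8$, so only the two primes $q \in \{5,7\}$ remain. For each of these, Proposition \ref{Borne_avec_A} applies (since $g = 1$ and $q \leq 9$), and a direct evaluation of its floor gives $\rho(q,1) \leq 2$. Combined with the trivial fact that every elliptic curve has a rational origin, so $B_1 \geq 1$, this yields $\rho(q,1) = 1$.

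For items (ii) and (iii), Proposition \ref{Borne_avec_A} again gives $\rho(q,1) \leq 3$ for $q \in \{3,4\}$ by a direct floor evaluation. For the matching lower bound $\rho(q,1) \geq 3$, I would exhibit a maximal elliptic curve, with trace $t = -\lfloor 2\sqrt{q}\rfloor = -q$ in this range; for such a trace one computes $N_2 - N_1 = q^2 + q - t^2 + t = 0$, so $B_2 = 0$. Concretely, $y^2 = x^3 - x + 1$ over $\F_3$ has $N_1 = 7$ and $t = -3$, and $y^2 + y = x^3$ over $\F_4$ has $N_1 = 9$ and $t = -4$; in each case one checks smoothness directly and reads off $N_2 = N_1$ from the previous formula.

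Item (i), the case $q = 2$, is the most delicate, since Proposition \ref{Borne_avec_A} only yields $\rho(2,1) \leq 6$ and this bound must be tightened by one. For the lower bound $\rho(2,1) \geq 5$, I would take the supersingular curve $y^2 + y = x^3$ over $\F_2$, which has $N_1 = 3$ and trace $t = 0$; the Newton-sum recursion produces $S_2 = -4$, $S_4 = 8$, hence $N_2 = N_4 = 9$ and $B_4 = (N_4 - B_1 - 2B_2)/4 = (9 - 3 - 6)/4 = 0$. For the upper bound $\rho(2,1) \leq 5$, Proposition \ref{Borne_avec_A} already covers $r \geq 6$, while the remaining value $r = 5$ is handled by Lemma \ref{lemma:closed_point_existence_p_prime}, because $5$ is prime and $1 = g < (\sqrt{32} - \sqrt{2})/2 = 3\sqrt{2}/2$. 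The main obstacle is precisely this last step: the general bounds narrowly miss the optimal value $5$, and the gap is closed by Lemma \ref{lemma:closed_point_existence_p_prime} only because $5$ happens to be prime; had the threshold been composite, the natural fallback would be a finite tabulation of $B_5$ across the five isogeny classes $t \in \{-2,-1,0,1,2\}$ using the recursion above.
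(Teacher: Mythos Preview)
Your proposal is correct and follows essentially the same approach as the paper: upper bounds on $\rho$ come from Proposition~\ref{Borne_avec_A} (or equivalently Proposition~\ref{theorem_general}) together with Lemma~\ref{lemma:closed_point_existence_p_prime} for the prime $r=5$ when $q=2$, and lower bounds come from exhibiting specific elliptic curves with $B_2=0$ or $B_4=0$. The only cosmetic difference is that for (ii) and (iii) you produce the witness curves yourself via the maximal-trace computation $N_2-N_1=q^2+q-t^2+t=0$ at $t=-q$, whereas the paper simply cites Lario's list of DS-curves; your version is slightly more self-contained but otherwise identical in substance.
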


\begin{proof}
(i) First we consider the case where $q=2$.
Proposition \ref{theorem_general} leads to $\rho(2,1)\leq 6$.
 Lemma \ref{lemma:closed_point_existence_p_prime}
also applies and ensures the existence of a degree $5$ point,
so  $\rho(2,1)\leq 5$.
Note that a curve $X$  defined over ${\mathbb F}_2$ 
 satisfies
$B_4(X)=0$
 if and only if
it is a DS-curve  for the extension 
${\mathbb F}_{2^4}/{\mathbb F}_{2^2}$.
As Lario indicates in \cite{Lario} the elliptic curve of equation $y^2+y=x^3$ 
is an example of such a curve and we can conclude that $\rho(2,1)= 5$.

(ii) For $q=3$ we use Proposition \ref{Borne_avec_A} to get $\rho(3,1)\leq 3$.
Lario gives in \cite{Lario}
a DS-curve of genus 1 for the extension ${\mathbb F}_9/{\mathbb F}_3$, namely the curve of equation $y^2=x^3+2x+1$.
So there exists an elliptic curve defined over ${\mathbb F}_3$ with no closed point of degree 2, and thus we have that $\rho(3,1)\geq 3$ which gives the result.

(iii) For $q=4$ Proposition \ref{Borne_avec_A} gives $\rho(4,1)\leq 3$ and 
as the elliptic curve of equation $y^2+y=x^3$
defined  in \cite{Lario} has no closed point of degree 2 we can conclude that   
 $\rho(4,1)= 3$.

(iv) We use Proposition \ref{Borne_avec_A} 
to get $\rho(5,1)\leq 2$ and $\rho(7,1)\leq 2$.
 But 
any elliptic curve 
over a finite field has a rational point,
so  $\rho(5,1)= 1$ and $\rho(7,1)= 1$.
At last, Corollary \ref{corollary_q_large} shows that for any $q\geq 8$ we have $\rho(q,1)= 1$, which concludes the proof.
\end{proof}





\section{Genus 2 curves}\label{section:genus_2}

In this section we consider a curve $X$ of genus 2 defined over ${\mathbb F}_q$.
To make the proof of Theorem \ref{theorem_genre_2} more readable
we first establish the following lemma.
\begin{lemma}\label{closed_point_genre_2}
Any absolutely irreducible smooth projective algebraic curve $X$ of genus 2 
 defined over ${\mathbb F}_2$
 (respectively over ${\mathbb F}_4$)  admits at least one closed point of degree $4$
 (respectively of degree $2$ and $3$).
 \end{lemma}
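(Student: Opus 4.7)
The plan is to establish three lower bounds: $B_3\geq 1$ and $B_2\geq 1$ for any genus $2$ curve over $\mathbb{F}_4$, and $B_4\geq 1$ for any genus $2$ curve over $\mathbb{F}_2$. The $\mathbb{F}_4$ degree-$3$ statement is an immediate consequence of Lemma \ref{lemma:closed_point_existence_p_prime}: the hypothesis $g<(\sqrt{q^r}-\sqrt{q})/2$ specialises to $2<(8-2)/2=3$, which holds, so $B_3(X)\geq 1$.

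For the two remaining cases, I would use the explicit parametrization of the Weil polynomial of a genus $2$ curve $X$: write $L_X(T)=(1-t_1 T+qT^2)(1-t_2 T+qT^2)$ where $t_1,t_2\in[-2\sqrt{q},2\sqrt{q}]$ are real and $s:=t_1+t_2$, $p:=t_1 t_2$ are integers. Newton's identities then yield the polynomial formulas $2B_2=28+s-s^2+2p$ for $q=4$ and $4B_4=2p^2-(s^2-2p)^2+9(s^2-2p)-12$ for $q=2$, reducing the question to a finite case analysis over the integer pairs $(s,p)$ compatible with the interval constraint on $t_1,t_2$ and with $N_n\geq 0$.

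For the $\mathbb{F}_2$ degree-$4$ assertion, I would treat $4B_4=0$ as a quadratic in $p$ with $s$ fixed; a direct computation shows its discriminant equals $4(2s^4-18s^2+57)$. Scanning $s\in\{-5,\ldots,5\}$ (the range forced by $|s|\leq 2g\sqrt{q}=4\sqrt{2}$) one verifies that $2s^4-18s^2+57\in\{17,41,57,281,857\}$, none of which is a perfect square. Therefore $4B_4\neq 0$ on any admissible integer pair, and since $B_4$ is a non-negative integer, $B_4\geq 1$ follows.

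For the $\mathbb{F}_4$ degree-$2$ case, $2B_2=0$ forces $p=(s^2-s-28)/2$. Checking each integer $s$ in $\{-8,\ldots,5\}$ individually, one sees that the geometric requirement that both roots of $X^2-sX+p$ lie in $[-4,4]$ (equivalently $s^2-4p\geq 0$ together with $f(\pm 4)\geq 0$ where $f(T)=T^2-sT+p$) eliminates every candidate except $(s,p)=(0,-14)$, corresponding to the Weil polynomial $L(T)=1-6T^2+16T^4$. To finish, one must show that this polynomial is not realised by any smooth genus $2$ curve over $\mathbb{F}_4$; I would invoke the classification of Jacobian Weil polynomials in genus $2$ (Maisner--Nart), or equivalently cross-reference with Lario's list of DS-curves in \cite{Lario}, since $B_2=0$ is precisely the condition that $X$ be diophantine-stable for the quadratic extension $\mathbb{F}_{16}/\mathbb{F}_4$. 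The hard part of the proof is precisely this exclusion step: the elementary Weil-polynomial enumeration leaves one borderline isogeny class, and ruling it out requires a classification result on principally polarised abelian surfaces rather than a purely arithmetic inequality.
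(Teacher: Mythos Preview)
Your argument is correct, and it takes a genuinely different route from the paper. The paper's own proof is a one-line deferral to Lario's census of DS-curves: for each of the three assertions one observes that $B_r(X)=0$ is equivalent to a Diophantine-stability statement (e.g.\ $B_4=0$ over $\mathbb{F}_2$ is equivalent to $N_4=N_2$, i.e.\ the base change of $X$ to $\mathbb{F}_4$ being DS for $\mathbb{F}_{16}/\mathbb{F}_4$), and then checks against the list in \cite{Lario} that no such genus~$2$ curve exists.

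Your approach is considerably more explicit. The degree-$3$ case over $\mathbb{F}_4$ is dispatched by the Weil-bound Lemma~\ref{lemma:closed_point_existence_p_prime} with no list needed. The degree-$4$ case over $\mathbb{F}_2$ is handled by a clean self-contained enumeration: your discriminant computation $4(2s^4-18s^2+57)$ is correct, and the values $17,41,57,281,857$ are indeed all non-squares, so no external classification is required at all for this case. Only the degree-$2$ case over $\mathbb{F}_4$ forces an appeal to Maisner--Nart or Lario, and your reduction correctly isolates the single borderline Weil polynomial $1-6T^2+16T^4$ (with $N_1=N_2=5$) that the elementary inequalities cannot exclude. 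What your approach buys is transparency: two of the three statements become genuinely elementary, and the third is pinpointed to the exclusion of one explicit isogeny class, making clear exactly where the deeper input is unavoidable. What the paper's approach buys is brevity, since all three cases fall under a single citation.
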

\begin{proof}
We can relate the issue to the existence of a DS-curve
 and thus conclude with \cite{Lario}.
 \end{proof}
As an important ingredient of the following theorem we will also make use of the classification of genus $2$ curves
over $\mathbb{F}_q$ up to $\mathbb{F}_q$-isomorphism and quadratic twist provided by Maisner and Nart   in \cite{MNH}.

\begin{theorem} \label{theorem_genre_2}

The values of $\rho(q,2)$ are as follows. \\

\begin{tabular}{cccccccc}
(i) &   $\rho(2,2)=4$ & (ii) & $\rho(4,2)=2$ & \ \ \ & (v) & $\rho(q,2)=2$ for $7 \leq q \leq 11$ \\ \ \\
(iii) &  $\rho(3,2) = 4$ & (iv) & $\rho(5,2)=3$ & \ \ \ & (vi) & $\rho(q,2)=1$ \textrm{for any} $q \geq 13$
\end{tabular} 

\end{theorem}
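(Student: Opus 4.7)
The plan is to establish, for each value of $q$, both an upper bound $\rho(q,2) \leq N$ (every genus $2$ curve over $\mathbb{F}_q$ has a closed point of each degree $r \geq N$) and a matching lower bound (exhibiting a single curve with no closed point of degree $N-1$), combining the general bounds of Section 2, Lemma \ref{closed_point_genre_2}, the Maisner--Nart classification of Weil polynomials of genus $2$ curves in \cite{MNH}, and Lario's explicit list of DS-curves in \cite{Lario}.

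For $q \geq 7$ the upper bound $\rho(q,2) \leq 2$ is already supplied by Corollary \ref{corollary_q_large}, so case (v) reduces to producing, for each $q \in \{7,8,9,11\}$, a pointless genus $2$ curve over $\mathbb{F}_q$; such curves are standard and can be extracted from \cite{MNH} or \cite{Lario}. To strengthen this to $\rho(q,2) = 1$ for $q \geq 13$ in case (vi), one has to rule out $N_1 = 0$ over each such $\mathbb{F}_q$: Serre's refinement $|N_1 - (q+1)| \leq g\lfloor 2\sqrt{q}\rfloor$ closes the argument directly as soon as $q \geq 17$, while the borderline value $q = 13$ is handled by running through the admissible Weil polynomials in \cite{MNH} and checking $N_1 \geq 1$ in each case.

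The small-field cases are treated as follows. Case (ii), $q = 4$, is immediate: Lemma \ref{closed_point_genre_2} gives $\rho(4,2) \leq 2$, and a pointless curve from \cite{Lario} yields the reverse inequality. For case (iv), $q = 5$, the bound $\rho(5,2) \leq 3$ amounts to $B_3(X) \geq 1$ for every genus $2$ curve over $\mathbb{F}_5$, which I would verify by inspecting the Weil polynomials in \cite{MNH}; the matching lower bound $\rho(5,2) \geq 3$ comes from a DS-curve for $\mathbb{F}_{25}/\mathbb{F}_5$ listed in \cite{Lario}. For case (iii), Proposition \ref{theorem_general} gives $\rho(3,2) \leq 4$ directly, and the lower bound is realised by a DS-curve for $\mathbb{F}_{27}/\mathbb{F}_3$. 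For case (i), Proposition \ref{theorem_general} only yields $\rho(2,2) \leq 7$; Lemma \ref{closed_point_genre_2} improves this by giving $B_4 \geq 1$, the prime cases $r = 5, 7$ fall out of Lemma \ref{lemma:closed_point_existence_p_prime}, and the tail $r \geq 8$ is covered by (\ref{Lower_Bound_AHL}). The only remaining value is $r = 6$, which I would settle by direct inspection of the Maisner--Nart list over $\mathbb{F}_2$, together with the identity $6B_6 = N_6 - N_3 - N_2 + N_1$. Finally $\rho(2,2) \geq 4$ is realised by the DS-curve for $\mathbb{F}_8/\mathbb{F}_2$ in \cite{Lario}.

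The principal obstacle is the handful of intermediate cases where no clean general bound closes the gap: the borderline $q = 13$ in (vi) and the composite value $r = 6$ over $\mathbb{F}_2$ in (i). In both, the Weil ranges inserted into the M\"obius-inversion expression for $B_r$ (or into the Weil bound on $N_1$) do not a priori force positivity, since the asymptotic inequalities (\ref{Inequality_TVN}), (\ref{equation:Elkies_Br}), and (\ref{Lower_Bound_AHL}) are not sharp enough in this regime. The remedy, in each such situation, is the Maisner--Nart classification: because the list of admissible Weil polynomials is finite and explicit, one computes the exact values of the relevant $N_d$'s and verifies the required positivity by direct case check, which is what upgrades the general results of Section 2 to the sharp values stated in the theorem.
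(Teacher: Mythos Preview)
Your overall strategy is sound and would lead to a correct proof, but in three places you work much harder than the paper does because you overlook shortcuts already available in Section~2 and in \cite{MNH}.

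\textbf{Case (i), $q=2$.} You start from Proposition~\ref{theorem_general}, which indeed only gives $\rho(2,2)\leq 7$, and then propose to treat $r=5,6,7$ one at a time, with the composite value $r=6$ requiring a full pass through the Maisner--Nart list. The paper avoids all of this by invoking the Elkies bound~(\ref{equation:Elkies_rho}), namely $\rho(q,g)\leq 2g+1$, which for $g=2$ gives $\rho(2,2)\leq 5$ immediately. Combined with Lemma~\ref{closed_point_genre_2} for $B_4\geq 1$, this closes the upper bound with no case check at $r=6$ at all.

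\textbf{Case (iv), $q=5$.} You propose to establish $\rho(5,2)\leq 3$ by inspecting Weil polynomials in \cite{MNH} to force $B_3\geq 1$. This is unnecessary: Proposition~\ref{theorem_general} already gives $\rho(5,2)\leq \max(2,\lceil 4\log_5 3\rceil)=3$ directly. (Equivalently, $(5^{3/4}-1)^2>4=2g$, so (\ref{Lower_Bound_AHL}) applies from $r=3$ onward.) The paper simply cites Proposition~\ref{theorem_general} here.

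\textbf{Case (vi), $q\geq 13$.} You rule out pointless curves via Serre's refinement for $q\geq 17$ and a Maisner--Nart case check at $q=13$ (and you do not explicitly address $q=16$, though Serre does cover it). The paper instead cites Theorem~3.2 of \cite{MNH}, which states outright that no pointless genus~$2$ curve exists over $\mathbb{F}_q$ once $q>11$; this single citation handles all of~(vi) and supplies the lower bound for~(v) simultaneously.

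In short, your proposed Maisner--Nart case analyses at $r=6$ over $\mathbb{F}_2$, at $B_3$ over $\mathbb{F}_5$, and at $N_1$ over $\mathbb{F}_{13}$ are all correct in principle but superfluous: the paper's proof is appreciably shorter because it leans on~(\ref{equation:Elkies_rho}), on the sharper reading of Proposition~\ref{theorem_general}, and on the non-existence theorem in \cite{MNH}.
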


\begin{proof}
First,
Corollary \ref{corollary_q_large}
 implies that for $q\geq 7$ we have 
$\rho(q,2)\leq 2$. 
On the other hand Theorem 3.2. in \cite{MNH} ensures 
that there is no genus $2$ pointless curve defined over $\mathbb{F}_q$ if $q>11$.
Hence we conclude that $\rho(q,2)= 1$ for any $q\geq 13$ and point $(vi)$ follows.

The same reference provides an example 
of a pointless genus 2 curve defined over ${\mathbb F}_q$
for any $q\leq 11$.
It implies that $\rho(q,2)\geq 2$ for $q\leq 11$, and 
thus point $(v)$ is proved.

Now we consider the case where $q=2$.
The genus 2 curve $X$  
described
in Table 2 of \cite{MNH} 
 by the equation $y^2+y=x^5+x^2$ 
satisfies
$N_1(X)=N_3(X)=5$, which means that
 $B_3(X)=0$, and thus $\rho(2,2)\geq 4$.
 Lemma (\ref{closed_point_genre_2})
ensures the existence of a closed point of degree $4$ on any curve
whereas (\ref{equation:Elkies_rho}) gives $\rho(2,2)\leq 5$.
We can conclude that $\rho(2,2)=4$.

When $q=3$, Proposition \ref{theorem_general} asserts that $\rho(3,2)\leq 4$.
 The genus 2 curve $X$
defined in Table 3 in \cite{MNH}  and also in \cite{Lario}
 by the equation $y^2=(1+x^2)(-1+x+x^2)(-1-x+x^2)$
is such that 
$N_3(X)=N_1(X)=8$, and so
 $B_3(X)=0$ which implies $\rho(3,2)=4$.

 If $q=4$ we use Proposition \ref{theorem_general} to get $\rho(4,2)\leq 4$ 
 and Lemma \ref{closed_point_genre_2} to conclude.

Now suppose that $q=5$.  Proposition \ref{theorem_general} gives $\rho(5,2)\leq 3$ but
the genus 2 curve $X$ defined   
in \cite{Lario}
by the equation $y^2=x^5+4x$
satisfies  $B_2(X)=0$,
and the other inequality follows.
 
\end{proof}





\section{Genus 3 curves}\label{section:genus_3}

We would like to emphaze two of the main ingredients of the proof of the main theorem of this section.
First we will make use of the
{\sl L-functions and modular forms database} 
(sometimes refered as LMFDB, see \cite{LMFDB}).
Second, we will exploit the
Theorem 1.1. proved by Howe, Lauter and Top in \cite{HLT} 
which ensures 
that there exists a pointless genus 3 curve defined over ${\mathbb F}_q$
if and only if either $q=32$, $q=29$ or $q\leq25$. 
Hence for such values of $q$
we know that $\rho(q,3)\geq 2$ .

\begin{theorem} \label{theorem_genre_3}

The values of $\rho(q,3)$ are as follows. \\

\begin{center}
\begin{tabular}{cccccccl}
(i)    & $\rho(2,3)=7$    & (ii)  &  $\rho(3,3) = 5$ & (iii) &   $\rho(4,3) =3$  &   (iv)  & $\rho(5,3)=3$  \\ \\
 (v)  &  $\rho(7,3)=2$   & (vi) &  $\rho(8,3)=2$   & (vii) &  $\rho(9,3)=3$   &  (viii)  & $\rho(q,3)=2$ for $11 \leq q \leq 25$ \\  \\
 (ix)  & $\rho(27,3)=1$ & (x)  &  $\rho(29,3)=2$ & (xi)  &  $\rho(31,3)=1$ & (xii) & $\rho(32,3)=2$  \\ \\
& &   &  &   &  & (xiii)   &  $\rho(q,3)=1$ for any $q \geq 37$  \\ \\
       
\end{tabular} 
\end{center}

\end{theorem}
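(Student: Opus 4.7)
The plan is to stratify the values of $q$ according to the strength of the generic upper bound on $\rho(q,3)$ available from Section 2, and to match each stratum with a lower bound drawn either from the Howe--Lauter--Top classification of pointless genus 3 curves \cite{HLT} or from explicit curves extracted from the LMFDB \cite{LMFDB}. Three regimes emerge naturally: large $q$, where Corollary \ref{corollary_q_large} or Proposition \ref{Borne_avec_A} immediately forces $\rho(q,3)\leq 2$; the intermediate range $q\in\{7,8,9\}$, where $\rho(q,3)\leq 3$ is the best generic bound; and the small cases $q\in\{2,3,4,5\}$, which are handled by curve-by-curve verification.

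First I would handle the large $q$ regime. For $q\geq 37$, Corollary \ref{corollary_q_large} yields $\rho(q,3)\leq 2$, and the Howe--Lauter--Top theorem rules out pointless genus 3 curves over such $\mathbb{F}_q$, giving case (xiii). The same two ingredients--replacing Corollary \ref{corollary_q_large} by Proposition \ref{Borne_avec_A} when needed--prove $\rho(q,3)\leq 2$ for every $q\in\{11,13,16,17,19,23,25,27,29,31,32\}$; Howe--Lauter--Top then brings $\rho(q,3)$ down to $1$ for $q\in\{27,31\}$ (cases (ix), (xi)) and pins it at $2$ for the remaining $q$ via explicit pointless curves from \cite{HLT}, settling cases (viii), (x) and (xii).

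Second, the intermediate range. For $q=9$, Proposition \ref{Borne_avec_A} produces $\rho(9,3)\leq 3$, and the matching lower bound $\rho(9,3)\geq 3$ is secured by a genus 3 curve over $\mathbb{F}_9$ from the LMFDB whose Weil polynomial forces $B_1=B_2=0$ (case (vii)). For $q\in\{7,8\}$ the generic bound stalls at $3$, so we must sharpen the upper bound to $\rho(q,3)\leq 2$ by verifying $B_2(X)\geq 1$ for every genus 3 curve over $\mathbb{F}_q$. This will be done by enumerating the finitely many real Weil polynomials of degree $6$ compatible with the Weil--Serre constraints for $g=3$, computing $N_1$ and $N_2$ from each via Formula (\ref{Nombre_de_points_et_omega_i}), and checking via the LMFDB that no polynomial with $N_1=N_2$ is realized by an actual curve; combined with the pointless curves of Howe--Lauter--Top giving the lower bound $\rho(q,3)\geq 2$, this handles cases (v) and (vi).

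Third, the small cases. For $q=2$ (case (i)), the Elkies bound (\ref{equation:Elkies_rho}) gives $\rho(2,3)\leq 2g+1=7$, and the matching lower bound is supplied by an explicit LMFDB genus 3 curve over $\mathbb{F}_2$ whose zeta function forces $B_r(X)=0$ for some $r\leq 6$. For $q=3$ (case (ii)), Proposition \ref{Borne_avec_A} gives $\rho(3,3)\leq 5$, and a suitable LMFDB curve supplies the matching lower bound. For $q\in\{4,5\}$ (cases (iii), (iv)), the upper bound $\rho(q,3)\leq 3$ follows from combining Inequality (\ref{Lower_Bound_AHL}) on composite $r\geq 4$, Lemma \ref{lemma:closed_point_existence_p_prime} on the primes $r\geq 5$, and a finite Weil-polynomial enumeration at the borderline $r=3$; LMFDB curves with $B_1=B_2=0$ then force $\rho(q,3)\geq 3$. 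The main obstacle is expected to arise at these borderline prime values of $r$--over $\mathbb{F}_7$, $\mathbb{F}_8$ at $r=2$ and over $\mathbb{F}_4$, $\mathbb{F}_5$ at $r=3$--where the Weil bounds yield only $B_r\geq 0$, so no generic upper bound on $\rho(q,3)$ can be read off from them directly. The delicate step is therefore the finite enumeration of admissible Weil polynomials for a genus 3 curve over these small fields, together with a careful cross-check against the LMFDB to eliminate every candidate polynomial which, if realized, would contradict the claimed value of $\rho(q,3)$.
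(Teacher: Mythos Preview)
Your overall strategy matches the paper's: generic upper bounds from Section~2 combined with the Howe--Lauter--Top classification for the large-$q$ regime, and explicit curves from LMFDB or \cite{Lario} for the small-$q$ lower bounds. The differences are purely tactical, concentrated at the borderline cases you flag. For $q=5$ your proposed Weil-polynomial enumeration at $r=3$ is unnecessary: Proposition~\ref{Borne_avec_A} already yields $\rho(5,3)\leq 3$ directly (and even Lemma~\ref{lemma:closed_point_existence_p_prime} handles $r=3$, since $3<(\sqrt{125}-\sqrt{5})/2$). For $q=4$ the paper sidesteps enumeration with a one-line argument: the Weil bounds give $N_1\leq 17\leq N_3$, but Serre's tables \cite{Serre} record that the maximum of $N_1$ for a genus-$3$ curve over $\mathbb{F}_4$ is $14$, whence $N_1<N_3$ and $B_3>0$. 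For $q=7$ the paper simply cites Vrioni's thesis \cite{V}, which already rules out genus-$3$ DS-curves for $\mathbb{F}_{49}/\mathbb{F}_7$; for $q=8$ both approaches amount to a direct LMFDB lookup. Your enumeration route is valid but more laborious; the paper's shortcuts dissolve precisely the obstacle you identify as the delicate step.
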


\begin{proof}

As already mentioned, Theorem 1.1 of \cite{HLT} implies  that $\rho(q,3)\geq 2$ for $q\leq 25$, $q=29$ and $q=32$.

(i) 
The inequality (\ref{equation:Elkies_rho}) yields
 $\rho(2,3)\leq 7$.
Moreover,  LMFDB provides the curve $X$ 
of equation 
$x^4+x^2y^2+x^2yz+x^2z^2+xy^2z+xyz^2+y^4+y^2z^2+z^4=0$
which satisfies $B_6(X)=0$, and thus $\rho(2,3)=7$.


(ii) By Proposition \ref{theorem_general}  we have 
$\rho(3,3)\leq 5$.
But the curve $x^3z+xz^3+y^4=0$ given in  \cite{LMFDB} is a curve of genus 3 over ${\mathbb F}_3$  without points of degree 4, so $\rho(3,3)> 4$ and 
we are done.

(iii) Proposition \ref{theorem_general}   gives
$\rho(4,3)\leq 4$
whereas the curve $x^3y+x^3z+x^2y^2+xz^3+y^3z+y^2z^2=0$ defined over ${\mathbb F}_4$ and provided by \cite{LMFDB}   
has genus 3 and no points of degree 2. So $\rho(4,3)\geq 3$.
Furthermore, the Weil bounds  (\ref{Weil_Bounds}) yield
$N_1\leq 17 \leq N_3$.
But, there does not exist a curve over ${\mathbb F}_4$ of genus 3 with 17 rational points since $N_4(3)=14$
 (see Table 1 in \cite{Serre}).
Now  use
$B_3=(N_3-N_1)/3$ to get $B_3>0$ and the desired result.

(iv) Here Proposition \ref{Borne_avec_A}  improves 
Proposition (\ref{theorem_general}) and we get 
$\rho(5,3) \leq 3$.
Moreover, the curve $y^2=x^7+x^5+3x^3+x$ found in \cite{LMFDB} has no points of degree 2,
so $\rho(5,3) \geq 3$.

(v) Proposition \ref{theorem_general}  gives $\rho(7,3) \leq 3$.
Moreover, by Theorem 1 of \cite{V},  there does not exist a DS-curve  for the extension ${\mathbb F}_{7^2}  / {\mathbb F}_7$.
So one can conclude that $\rho(7,3)=2$.

(vi) Proposition \ref{theorem_general}  leads to  $\rho(8,3) \leq 3$. 
And by \cite{LMFDB} there is no genus $3$ curve $X$ 
defined over $\mathbb{F}_8$ such that $B_2(X)=0$.
One can see the coherence with results provided in \cite{Lario}: 
Lario states that there does not exist a DS-curves 
for the field extension $\mathbb{F}_{64} / \mathbb{F}_8$. 

(vii) By Proposition \ref{theorem_general}  again we have $\rho(9,3) \leq 3$.
Moreover, the genus 3 curve $X$ of equation $x^4+y^4+z^4=0$ defined over ${\mathbb F}_9$ 
proposed in
\cite{Lario}
has no degree 2 points since $N_1(X)=N_2(X)=28$.
This implies $\rho(9,3)=3$. 

(viii) 
We 
use the bound of Proposition (\ref{theorem_general})
to find
$\rho(q,3) \leq 2$
for $11 \leq q \leq 25$.
But for such values of $q$  Theorem 1.1. of \cite{HLT} states that there exists a pointless curve of genus $3$.

%
%
%
%
%

(ix), (x), (xi) and (xii)  We first use Proposition \ref{theorem_general} to learn that  $\rho(q,3) \leq 2$ for $q \in \{ 27, 29, 31, 32\}$.
Moreover, by Theorem 1.1. of \cite{HLT}, 
there exists a pointless curve defined over $\mathbb{F}_{29}$ nor $\mathbb{F}_{32}$
whereas there does not exist such a curve neither over $\mathbb{F}_{27}$ nor $\mathbb{F}_{31}$.

(xiii) 
Corollary \ref{corollary_q_large} enables us to 
check that for any $q \geq 37$ we have 
 $\rho(q,3) \leq 2$.
But  by Theorem 1.1. of \cite{HLT}, there does not exist a genus $3$ pointless curve for such values of $q$, so
$\rho(q,3)=1$.
\end{proof}

{\bf Acknowledgement.} The authors would like to thank 
Joan-C. Lario
for a useful discussion on  Diophantine  
Stability.

\bigskip
\bibliographystyle{plain}

\end{document}